\newcommand{\mycircle}[1]{\raisebox{-2.5pt}{\huge\textcircled{\raisebox{2pt}{\normalsize #1}}}} 
\newcommand{\mybigcircle}[1]{\raisebox{-2.5pt}{\Huge\textcircled{\raisebox{2pt}{\normalsize #1}}}} 
\theoremstyle{plain}
\numberwithin{equation}{section}
\newtheorem{thm}{Theorem}[section]
\newtheorem{theorem}[thm]{Theorem}
\theoremstyle{definition}
\begin{document}
\fancyhead{}
\renewcommand{\headrulewidth}{0pt}
\fancyfoot{}
\fancyfoot[LE,RO]{\medskip \thepage}
\fancyfoot[LO]{\medskip MISSOURI J.~OF MATH.~SCI., FALL 2020}
\fancyfoot[RE]{\medskip MISSOURI J.~OF MATH.~SCI., VOL.~32, NO.~2}

\setcounter{page}{1}

\title[Properties of Higher-Order Prime Number Sequences]{Properties of Higher-Order Prime Number Sequences}
\author{Michael P. May}
\address{443 W 2825 S\\
         Perry, Utah\\
         84302}
\email{mikemay@mst.edu}

\begin{abstract}
In this paper, we analyze properties of prime number sequences produced by the alternating sum of higher-order subsequences of the primes.  We also introduce a new sieve which will generate these prime number sequences via the systematic selection and elimination of prime number indexes on the real number line.
\end{abstract}

\maketitle


\section{Introduction}


We provide a brief explanation of the concept of prime numbers with prime subscripts. To this end, we refer the reader to the OEIS reference 
\cite{OEISHigherOrderPrimeNumbers} on higher-order prime numbers,
and to the paper \cite{PrimesHavingPrimeSubscripts}.
Higher-order prime numbers, also called superprime numbers or superprimes, are the primes that occupy prime-numbered positions within the sequence of all prime numbers. They are also called prime-indexed primes. The OEIS sequence A006450 \cite{OEIS_A006450} defines this sequence of prime numbers as
\begin{equation*}
a(n) = p_{p_{n}}= 3, 5, 11, 17, 31, 41, 59, 67, 83, 109, 127, 157, 179, 191,\dots \; .
\end{equation*}
That is, if $p_i$ denotes the $i$th prime number, then the numbers of sequence A006450 are of the form $p_{p_{i}}$.

Denote by \(\mathbb{P}\) the sequence of primes \(p_1, p_2, \dots\).
Suppose \(A \subseteq \mathbb{P}\), and suppose \(I\) is an index set of \(A\), that is,
for every \(n \in I\) we have \(p_n \in A\). Then we define
\(A_I = \{ p_n \mid n \in I \}\). Note that the indices are the position in the whole sequence of primes
and not in \(A\).

We define \(P^{(k)}\) recursively by 
\[
P^{(1)} = \mathbb{P} \text{ and } P^{(k)} = P_{P^{(k-1)}} \text{ for } k \ge 2.
\]
Thus \(P^{(1)}\) are the primes, \(P^{(2)}\) are the prime-indexed primes, and so on.
We use the notations \(P^{(k)}\) and \(p^{(k)}\) interchangeably.

\par
\ \\
\noindent\rule{0.84in}{0.4pt} \par
\medskip
\indent\indent {\fontsize{8pt}{9pt} \selectfont DOI: 10.35834/YYYY/VVNNPPP \par}
\indent\indent {\fontsize{8pt}{9pt} \selectfont MSC2020: 11A41, 11B99 \par}
\indent\indent {\fontsize{8pt}{9pt} \selectfont Key words and phrases: Primes, Prime number subsequences, Prime number \par} \indent\indent {\fontsize{8pt}{9pt} \selectfont indexes, Infinite sequences, Alternating sum, Higher-order primes, Sieves \par}

\thispagestyle{fancy}

\vfil\eject
\fancyhead{}
\fancyhead[CO]{\hfill HIGHER-ORDER PRIME NUMBER SUBSEQUENCES}
\fancyhead[CE]{M.~P.~MAY  \hfill}
\renewcommand{\headrulewidth}{0pt}

We will show that there is a unique partition of \(\mathbb{P}\) into an index set \(I\)
and an indexed set \(\mathbb{P}_I\). 
We denote these by \(\mathbb{P}' = I\) and \(\mathbb{P}'' = \mathbb{P}_I\).
We will see later that these two sets can be defined in terms of the sequences of higher-order primes.

\begin{theorem} \label{1.1}
There is a unique subset \(I\) of the prime numbers such that
\begin{equation*}
\mathbb{P^{}}={I}\cup\mathbb{P}_{{{I}}}
\end{equation*}
and
\begin{equation*}
{I}\cap\mathbb{P}_{{I}}=\emptyset,
\end{equation*}
\end{theorem}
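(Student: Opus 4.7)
The plan is to recast the two set-theoretic conditions as a single pointwise rule that uniquely determines the membership of each prime in $I$, and then to establish existence and uniqueness by a straightforward induction on the prime index.

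First I would observe that, because $\mathbb{P}_I = \{p_n \mid n \in I\}$, the prime $p_n$ lies in $\mathbb{P}_I$ if and only if $n \in I$ (with the understanding that $I \subseteq \mathbb{P}$, so ``$n \in I$'' is automatically false whenever $n$ is $1$ or composite). The two hypotheses $\mathbb{P} = I \cup \mathbb{P}_I$ and $I \cap \mathbb{P}_I = \emptyset$ together say that every prime $p_n$ lies in exactly one of $I$ and $\mathbb{P}_I$, so combining with the previous observation yields the equivalent pointwise rule
\[
p_n \in I \iff n \notin I, \qquad \text{for every } n \geq 1.
\]

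Next I would carry out a strong induction on $n$, taking advantage of the inequality $p_n > n$, which holds for all $n \geq 1$. For $n = 1$ the rule forces $p_1 = 2 \in I$, since $1 \notin I$ trivially. For $n \geq 2$: if $n$ is composite, then $n \notin I$ automatically (because $I \subseteq \mathbb{P}$), and the rule forces $p_n \in I$; if $n$ is prime, then $n = p_m$ for some $m < n$, and the membership of $p_m$ in $I$ has already been settled by the induction hypothesis, which via the rule forces a unique choice for $p_n$. In every case the membership of $p_n$ in $I$ is dictated with no freedom, which proves both existence and uniqueness of the required set $I$.

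The central step is really the initial reformulation: once the two partition conditions are compressed into the pointwise rule $p_n \in I \iff n \notin I$, the recursion $p_n \mapsto n$ is strictly decreasing, so no well-foundedness issue arises and the construction proceeds one prime at a time. I do not anticipate any serious obstacle beyond getting this bookkeeping correct.
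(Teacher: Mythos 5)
Your proposal is correct, but it is organized quite differently from the paper's own argument. You compress the two partition conditions into the pointwise biconditional $p_n \in I \iff n \notin I$ (valid since $p_n \in \mathbb{P}_I$ exactly when $n \in I$, and every prime is $p_n$ for a unique $n$), and then observe that this rule determines membership by well-founded recursion: for $n$ equal to $1$ or composite the right-hand side is automatically true, and for $n$ prime the question reduces to the strictly smaller index $\pi(n)$, since $n = p_{\pi(n)} > \pi(n)$. Existence and uniqueness then fall out simultaneously from the recursion. The paper instead builds $I$ greedily in increasing order: assuming $q_1, \dots, q_n$ are the smallest elements of $I$ (so that $p_{q_1}, \dots, p_{q_n}$ are the smallest elements of $\mathbb{P}_I$), it argues by contradiction that the smallest prime $q$ not yet accounted for must lie in $I$ --- for if $q = p_r \in \mathbb{P}_I$, then $r < p_r = q$ and $r > q_n$ would be a smaller prime satisfying the same minimality conditions, contradicting the choice of $q$. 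Both arguments ultimately rest on the inequality $p_n > n$, but they buy different things: your version makes uniqueness entirely automatic and avoids the informal steps in the paper's proof (the claims that the indices ``always lag behind'' and that ``all primes are consumed''), whereas the paper's greedy formulation produces the elements of $I$ in increasing order and thus corresponds directly to the N-sieve algorithm introduced later in the paper.
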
 

\begin{proof}
We begin by showing that ${I}$ is unique and that the first $n$ elements of ${I}$ and $\mathbb{P}_{{I}}$ determine the $n+1st$ element of ${I}$. First, we write
\begin{equation*}
q_1, \dots, q_n
\end{equation*}
to designate the smallest elements of ${I}$ in increasing order. And because the $nth$ prime function $p_n$ is a monotonic increasing function, it follows that
\begin{equation*}
p_{q_{1}},\dots,p_{q_{n}}
\end{equation*}
are the smallest elements of $\mathbb{P}_{{I}}$ in corresponding increasing order. Now let $q$ be the smallest prime number that is different from all $q_1$,\dots,$q_n$ and $p_{q_{1}}$,\dots,$p_{q_{n}}$. We know that either $q \in {I}$ or $q \in \mathbb{P}_{{I}}$, and our claim is that $q \in {I}$. But if we suppose to the contrary that $q \in \mathbb{P}_{{I}}$, then there must be a prime number $r$ such that $q=p_r$. Now we assumed that $q>p_{q_{n}}$, and it follows from the definition that $r>q_n$ and $r<p_r=q$. But this means that $r$ satisfies the same conditions as $q$: i.e., that $r$ is the smallest prime different from all $q_1$,\dots,$q_n$ and $p_{q_{1}}$,\dots,$p_{q_{n}}$. So we should have chosen $r$ instead of $q$ because $r$ is smaller than the $q$ we assumed to be an element of $\mathbb{P}_{{I}}$. Therefore, our assumption that $q \in \mathbb{P}_{{I}}$ is a contradiction, and the smallest prime $q$ different from all $q_1$,\dots,$q_n$ and $p_{q_{1}}$,\dots,$p_{q_{n}}$ must be an element of I because the elements of ${I}$, which serve as the subscripts of $\mathbb{P}_{{I}}$,  always lag behind the elements of $\mathbb{P}_{{I}}$. Furthermore, since all prime numbers $\mathbb{P}$ are consumed in the process of adding the smallest available prime number different from all $q_1$,\dots,$q_n$ and $p_{q_{1}}$,\dots,$p_{q_{n}}$ to ${I}$, then we have shown that ${I}$ and $\mathbb{P}_{I}$ are complement sets of  $\mathbb{P}$.
\end{proof}

A new sieve will be introduced shortly which iteratively applies the smallest available prime number as a subscript ${I}$ to the set of all prime numbers $\mathbb{P}$ so that eventually all primes get used up and the partition is created for $\mathbb{P}$.


\section{Alternating Sum}


We discovered that the unique subset of prime numbers $\mathbb{P{'}}$ can be generated via an alternating sum of prime number subsequences of increasing order.  Thus, one can define $\mathbb{P{'}}$ as
\begin{equation}
\mathbb{P^{'}}={\left\lbrace{(-1)^{n-1}}\left\lbrace{p^{(n)}}\right\rbrace\right\rbrace}_{n=1}^\infty \label{eq:1}
\end{equation}
where the right-hand side of Eq. \ref{eq:1} is an expression of the alternating sum
\begin{equation}
\left\lbrace{p^{(1)}}\right\rbrace - \left\lbrace{p^{(2)}}\right\rbrace + \left\lbrace{p^{(3)}}\right\rbrace - \left\lbrace{p^{(4)}}\right\rbrace + \left\lbrace{p^{(5)}}\right\rbrace -\dots \; \label{eq:2}
\end{equation}
where the plus and minus signs indicate set union and set difference rather than arithmetic
operations on the elements of the sequences.
The sets $\left\lbrace{p^{(n)}}\right\rbrace$ in \ref{eq:2} are defined as
\begin{equation}
\left\lbrace{p^{(1)}}\right\rbrace = {\left\lbrace{p_n}\right\rbrace}_{n=1}^\infty = \left\lbrace2,3,5,7,11,13,17,19,23,29,31,37,41,43,\dots\right\rbrace \label{eq:3}
\end{equation}
$=$ the set of all prime numbers, or what we will call the set of all first-order primes ${P}^{(1)}$(\cite{OEIS_A000040})
\begin{equation}
\left\lbrace{p^{(2)}}\right\rbrace = {\left\lbrace{p_{p_n}}\right\rbrace}_{n=1}^\infty = \left\lbrace3,5,11,17,31,41,59,67,83,109,127,157,\dots\right\rbrace \label{eq:4}
\end{equation}
$=$ the set of prime numbers with prime indexes, or second-order primes ${P}^{(2)}$ (\cite{OEIS_A006450})
\begin{equation}
\left\lbrace{p^{(3)}}\right\rbrace = {\left\lbrace{p_{p_{p_n}}}\right\rbrace}_{n=1}^\infty = \left\lbrace5,11,31,59,127,179,277,331,431,599,\dots\right\rbrace \label{eq:5}
\end{equation}
$=$ the set of second-order prime numbers with prime indexes, or third-order primes ${P}^{(3)}$ (\cite{OEIS_A038580})
\begin{equation}
\left\lbrace{p^{(4)}}\right\rbrace = {\left\lbrace{p_{p_{p_{p_n}}}}\right\rbrace}_{n=1}^\infty = \left\lbrace11,31,127,277,709,\dots\right\rbrace \label{eq:6}
\end{equation}
$=$ the set of third-order prime numbers with prime indexes, or fourth-order primes ${P}^{(4)}$  (\cite{OEIS_A049090})
\begin{equation}
\left\lbrace{p^{(5)}}\right\rbrace = {\left\lbrace{p_{p_{p_{p_{p_n}}}}}\right\rbrace}_{n=1}^\infty = \left\lbrace31,127,709,\dots\right\rbrace \label{eq:7}
\end{equation}
$=$ the set of fourth-order prime numbers with prime indexes, or fifth-order primes ${P}^{(5)}$ (\cite{OEIS_A049203})
and so on and so forth.

To facilitate the construction of the unique subset of prime numbers $\mathbb{P^{'}}$, one can circumvent the bulk alternating summation of the aforementioned prime number subsequences of increasing order by arranging those subsequences side-by-side and summing laterally across the rows to create the new $\mathbb{P^{'}}$ sequence term-by-term as follows:

\begin{table}[ht]
\centering
\begin{tabular}{cccccccccc}

(row) & $+p^{(1)}$ & $-p^{(2)}$ & $+p^{(3)}$ & $-p^{(4)}$ & $+p^{(5)}$ & $-p^{(6)}$ & $\dots$ & $p{'}$ \\ 
&  &  &  &  &  &  &  &  &  \\ 
(1) & 2 & $\longrightarrow$ & $\longrightarrow$ & $\longrightarrow$ & $\longrightarrow$ & $\longrightarrow$ & $\longrightarrow$ &  2 \\ 
(2) & 3 & 3 & $\longrightarrow$ & $\longrightarrow$ & $\longrightarrow$ & $\longrightarrow$ & $\longrightarrow$ &  0 \\ 
(3) & 5 & 5 & 5 & $\longrightarrow$ & $\longrightarrow$ & $\longrightarrow$ & $\longrightarrow$ &  5 \\ 
(4) & 7 & $\longrightarrow$ & $\longrightarrow$ & $\longrightarrow$ & $\longrightarrow$ & $\longrightarrow$ & $\longrightarrow$ &  7 \\ 
(5) & 11 & 11 & 11 & 11 & $\longrightarrow$ & $\longrightarrow$ & $\longrightarrow$ &  0 \\ 
(6) & 13 & $\longrightarrow$ & $\longrightarrow$ & $\longrightarrow$ & $\longrightarrow$ & $\longrightarrow$ & $\longrightarrow$ &  13 \\ 
(7) & 17 & 17 & $\longrightarrow$ & $\longrightarrow$ & $\longrightarrow$ & $\longrightarrow$ & $\longrightarrow$ &  0 \\ 
(8) & 19 & $\longrightarrow$ & $\longrightarrow$ & $\longrightarrow$ & $\longrightarrow$ & $\longrightarrow$ & $\longrightarrow$ &  19 \\ 
(9) & 23 & $\longrightarrow$ & $\longrightarrow$ & $\longrightarrow$ & $\longrightarrow$ & $\longrightarrow$ & $\longrightarrow$ &  23 \\ 
(10) & 29 & $\longrightarrow$ & $\longrightarrow$ & $\longrightarrow$ & $\longrightarrow$ & $\longrightarrow$ & $\longrightarrow$ &  29 \\ 
(11) & 31 & 31 & 31 & 31 & 31 & $\longrightarrow$ & $\longrightarrow$ &  31 \\ 
$\vdots$ & $\vdots$ & $\vdots$ & $\vdots$ & $\vdots$ & $\vdots$ & $\vdots$ & $\vdots$ & $\vdots$  
\end{tabular}
\caption{Calculation of \(\mathbb{P}'\) as an alternating sum.}
\label{table:sum}
\end{table}

The infinite prime number sequence $\mathbb{P^{'}}$ emerging in the rightmost column of Table \ref{table:sum} is
\begin{equation*}
\mathbb{P^{'}} = \left\lbrace {p{'}} \right\rbrace = \left\lbrace 2,5,7,13,19,23,29,31,37,43,47,53,59,61,71,\dots \right\rbrace.
\end{equation*}
This is the new OEIS sequence A333242 (\cite{OEIS_A333242}). When $\mathbb{P^{'}}$ is generated by the cross-summation method demonstrated in Table \ref{table:sum} above, this sequence is represented by the equation:
\begin{align*}
& \phantom{{}+{}} \{p_1^{(1)}-p_1^{(2)}+p_1^{(3)}-p_1^{(4)}+\cdots\}+\{p_2^{(1)}-p_2^{(2)}+p_2^{(3)}-p_2^{(4)}+\cdots\} \\
&+ \{p_3^{(1)}-p_3^{(2)}+p_3^{(3)}-p_3^{(4)}+\cdots\}+  \{p_4^{(1)}-p_4^{(2)}+p_4^{(3)}-p_4^{(4)}+\cdots\}
+ \cdots \\
&+\{p_\infty^{(1)}-p_\infty^{(2)}+p_\infty^{(3)}-p_\infty^{(4)}+\cdots\} = {\left\lbrace\displaystyle\sum\limits_{n=1}^\infty (-1)^{n-1}p_r^{(n)}\right\rbrace}_{r=1}^\infty
\end{align*}
where the superscript $(n)$ denotes the order of primeness as defined in Eqs. \ref{eq:3}--\ref{eq:7} and the subscript $r$ denotes the row number corresponding to the natural number in the leftmost column of Table \ref{table:sum}.  Therefore, we find that the new prime number sequence $\mathbb{P{'}}$ can either be generated via the bulk alternating summation of infinite prime number subsequences of increasing order or it can be constructed finitely term-by-term by summing laterally across the prime number subsequences of increasing order as illustrated in Table \ref{table:sum} so that we have
\begin{equation*}
\mathbb{P^{'}}={\left\lbrace{(-1)^{n-1}}\left\lbrace{p^{(n)}}\right\rbrace\right\rbrace_{n=1}^\infty} = {\left\lbrace\displaystyle\sum\limits_{n=1}^\infty (-1)^{n-1}p_r^{(n)}\right\rbrace_{r=1}^\infty}.
\end{equation*}
Recall that we previously defined $\mathbb{P^{''}}$ as the complement of $\mathbb{P^{'}}$ with respect to the set of all prime numbers $\mathbb{P^{}}$. As such, we have
\begin{equation}
\mathbb{P^{''}} = \overline{\mathbb{P^{'}}} = \mathbb{P{}} - \mathbb{P^{'}} = \left\lbrace3,11,17,41,67,83,109,127,\dots\right\rbrace. \label{eq:8}
\end{equation}
The right-hand side of Eq. \ref{eq:8} represents the OEIS sequence A262275 (\cite{OEIS_A262275}). We observe that $\mathbb{P^{''}}$ can also be defined in terms of the following alternating sum of prime number subsequences of increasing order by adjusting the starting index and reversing the sign of the terms on the right hand side of Eq. \ref{eq:1}:
\begin{equation*}
\mathbb{P^{''}}={\left\lbrace{(-1)^{n}}\left\lbrace{p^{(n)}}\right\rbrace\right\rbrace}_{n=2}^\infty \; = \left\lbrace{p^{(2)}}\right\rbrace - \left\lbrace{p^{(3)}}\right\rbrace + \left\lbrace{p^{(4)}}\right\rbrace - \cdots \; .
\end{equation*}
Similarly, we can define even higher classes of prime number sequences formed by the alternating sums of prime number subsequences of increasing order.  If we let
\begin{align*}
\mathbb{P}^{(1)} &= \mathbb{P^{'}} \\
\mathbb{P}^{(2)} &= \mathbb{P^{''}} \\
\mathbb{P}^{(3)} &= \mathbb{P^{'''}} \\
&\vdots
\end{align*}
then
\begin{align*}
\mathbb{P}^{(1)} &= \left\lbrace{(-1)^{n-1}}\left\lbrace{p^{(n)}}\right\rbrace\right\rbrace_{n=1}^\infty \\ 
\mathbb{P}^{(2)} &= \left\lbrace{(-1)^{n}}\left\lbrace{p^{(n)}}\right\rbrace\right\rbrace_{n=2}^\infty \\ 
\mathbb{P}^{(3)} &= \left\lbrace{(-1)^{n-1}}\left\lbrace{p^{(n)}}\right\rbrace\right\rbrace_{n=3}^\infty. \\
&\vdots
\end{align*}
We conclude this section with a proof of our initial claim that $\mathbb{P{'}}$ is equivalent to the alternating sum of all prime number sequences of increasing order beginning at $\mathbb{P}=P^{(1)}$.

\begin{theorem} \label{2.1}
\begin{equation*}
\mathbb{P^{'}} = \left\lbrace{p^{(1)}}\right\rbrace - \left\lbrace{p^{(2)}}\right\rbrace + \left\lbrace{p^{(3)}}\right\rbrace - \left\lbrace{p^{(4)}}\right\rbrace + \cdots \; .
\end{equation*}
\end{theorem}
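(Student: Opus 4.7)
The plan is to interpret the alternating set-sum on the right as an explicit set $S\subseteq\mathbb{P}$ and then invoke the uniqueness clause of Theorem~\ref{1.1} to conclude $S=I=\mathbb{P}'$. The first structural input is the descending chain
\[
P^{(1)}\supseteq P^{(2)}\supseteq P^{(3)}\supseteq\cdots ,
\]
which follows by induction from $P^{(k)}=P_{P^{(k-1)}}$: denoting by $\pi\colon\mathbb{P}\to\mathbb{N}$ the index map defined by $p_{\pi(p)}=p$, we have the equivalence $p\in P^{(k)}\iff\pi(p)\in P^{(k-1)}$ for $k\ge 2$, so each inclusion propagates upward. Because $p>\pi(p)>\pi^2(p)>\cdots$ is a strictly decreasing sequence of positive integers, every prime $p$ has a finite \emph{depth}
\[
k(p)\;=\;\max\{\,k\ge 1 : p\in P^{(k)}\,\}.
\]

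Next I would check that evaluating the expression $P^{(1)}-P^{(2)}+P^{(3)}-P^{(4)}+\cdots$ from left to right, with $+$ read as union and $-$ as set difference, produces exactly
\[
S\;:=\;\{\,p\in\mathbb{P}:k(p)\text{ is odd}\,\}.
\]
Because the $P^{(j)}$ are nested, the membership of a given prime $p$ at partial stage $j$ is locked in for all $j\ge k(p)$, since every later operation uses a set not containing $p$; and at stage $k(p)$ itself the last operation affecting $p$ is a union if $k(p)$ is odd and a subtraction if $k(p)$ is even. Hence $p$ lies in the formal alternating sum precisely when $k(p)$ is odd.

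The core combinatorial step is then to verify that $S$ has the two properties characterizing $I$ in Theorem~\ref{1.1}, and everything reduces to the identity
\[
k(p)\;=\;k(\pi(p))+1\quad\text{if }\pi(p)\in\mathbb{P},\qquad k(p)=1\text{ otherwise},
\]
which is immediate from $p\in P^{(k)}\iff\pi(p)\in P^{(k-1)}$. For disjointness $S\cap\mathbb{P}_S=\emptyset$: if $p\in\mathbb{P}_S$ then $\pi(p)\in S$, so $k(\pi(p))$ is odd and $k(p)=k(\pi(p))+1$ is even, placing $p$ outside $S$. For coverage $\mathbb{P}=S\cup\mathbb{P}_S$: any prime $p$ with $k(p)$ even has $k(p)\ge 2$, hence $\pi(p)$ is prime with $k(\pi(p))=k(p)-1$ odd, giving $\pi(p)\in S$ and $p=p_{\pi(p)}\in\mathbb{P}_S$. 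The uniqueness clause of Theorem~\ref{1.1} then identifies $S$ with $I=\mathbb{P}'$.

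The main obstacle is the middle step: pinning down that the somewhat informal alternating set-sum actually collapses to a single well-defined set, namely $\{p:k(p)\text{ odd}\}$. Once the parity invariant $k(p)$ is in hand, the recurrence $k(p)=k(\pi(p))+1$ makes the verification against the ``lag'' property used in the proof of Theorem~\ref{1.1} essentially automatic.
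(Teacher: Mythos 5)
Your proof is correct, but it takes a genuinely different---and substantially more rigorous---route than the paper's. The paper's own proof is essentially a telescoping observation: it asserts that $\mathbb{P}'$ and $\mathbb{P}''$ ``have been defined'' as the alternating sums beginning at $P^{(1)}$ and $P^{(2)}$ respectively (which comes close to assuming the statement to be proved, since $\mathbb{P}'$ was actually defined via Theorem \ref{1.1} as the index set of the unique partition), and then checks the consistency relation $\mathbb{P}'+\mathbb{P}'' = \left\lbrace P^{(1)}-P^{(2)}+P^{(3)}-\cdots\right\rbrace + \left\lbrace P^{(2)}-P^{(3)}+P^{(4)}-\cdots\right\rbrace = P^{(1)} = \mathbb{P}$. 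It never verifies the one condition that the uniqueness clause of Theorem \ref{1.1} actually requires: that the second set of the partition is precisely the set of primes \emph{indexed by} the first. Your proposal supplies exactly the ingredients the paper leaves implicit or missing: (i) the nesting $P^{(1)}\supseteq P^{(2)}\supseteq\cdots$ and the finiteness of the depth $k(p)$, which together make the infinite alternating set-sum well defined and identify it as the concrete set $S=\{p : k(p) \text{ odd}\}$; (ii) the recurrence $k(p)=k(\pi(p))+1$, which shows that $\mathbb{P}_S$ is exactly the set of primes of even depth, so that $S\cup\mathbb{P}_S=\mathbb{P}$ and $S\cap\mathbb{P}_S=\emptyset$; and (iii) the explicit appeal to uniqueness in Theorem \ref{1.1} to conclude $S=I=\mathbb{P}'$. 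What the paper's telescoping argument buys is brevity and a symmetric treatment of $\mathbb{P}'$ and $\mathbb{P}''$ in one stroke; what yours buys is a non-circular proof, since pinning down the alternating sum as a single well-defined set (via parity of depth) and verifying the indexed-complement property are precisely the steps on which the theorem's validity rests.
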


\begin{proof}
Theorem \ref{1.1} states that 
\begin{equation*}
\mathbb{P{}}=\mathbb{P{'}}+\mathbb{P{''}}.
\end{equation*}
Since $\mathbb{P{'}}$ has been defined as the alternating sum beginning at $P^{(1)}$, and $\mathbb{P{''}}$ has been defined as the alternating sum beginning at $P^{(2)}$, then we have
\begin{equation*}
\mathbb{P^{'}} = \left\lbrace{p^{(1)}}\right\rbrace - \left\lbrace{p^{(2)}}\right\rbrace + \left\lbrace{p^{(3)}}\right\rbrace - \left\lbrace{p^{(4)}}\right\rbrace + \cdots
\end{equation*}
and
\begin{equation*}
\mathbb{P^{''}} = \left\lbrace{p^{(2)}}\right\rbrace - \left\lbrace{p^{(3)}}\right\rbrace + \left\lbrace{p^{(4)}}\right\rbrace - \left\lbrace{p^{(5)}}\right\rbrace + \cdots
\end{equation*}
so that
\begin{align*}
\mathbb{P{}}&=\mathbb{P{'}}+\mathbb{P{''}} \\
&= \left\lbrace{P^{(1)}} - {P^{(2)}} + {P^{(3)}} - {P^{(4)}} + {P^{(5)}} -\cdots \right\rbrace \\
&+ \left\lbrace{P^{(2)}} - {P^{(3)}} + {P^{(4)}} - {P^{(5)}} + {P^{(6)}} -\cdots \right\rbrace \\
&= \mathbb{P{}}. \qedhere
\end{align*} 
\end{proof}


\section{N-Sieve}


We discovered that the $\mathbb{P^{'}}$ sequence can also be constructed by a new sieving operation performed on the domain of natural numbers $\mathbb{N}$ via the sequential selection and elimination of prime number indexes on the natural number line.  Hence, we introduce a new algorithm which we will call the ``N-sieve'' for performing such an operation.  The sieve is accomplished as follows:  Starting with $n=1$, choose the prime number with subscript 1 (i.e., $p_1=2$) as the first term of the $\mathbb{P^{'}}$ sequence and eliminate that prime number from the natural number line.  
We mark eliminated items by circling them.
Then, proceed forward on $\mathbb{N}$ from $1$ to the next available natural number.  Since $2$ was eliminated from the natural number line in the previous step, one moves forward to the next available natural number that has not been eliminated, which is $3$.  $3$ then becomes the subscript for the next $\mathbb{P^{'}}$ term which is $p_3=5$, and $5$ is then eliminated from the natural number line, and so on and so forth.  Such a sieving operation has been carried out in Table \ref{table:N100} for the natural numbers $1$ to $100$ and should be easy to follow.

\begin{table}[h]
\begin{center}
\[
\renewcommand{\arraystretch}{1.8}
\begin{array}{w{c}{6pt}w{c}{6pt}w{c}{6pt}w{c}{6pt}w{c}{6pt}w{c}{6pt}w{c}{6pt}w{c}{6pt}w{c}{6pt}w{c}{6pt}w{c}{6pt}w{c}{6pt}w{c}{6pt}w{c}{6pt}w{c}{6pt}w{c}{6pt}w{c}{6pt}w{c}{6pt}w{c}{6pt}w{c}{6pt}}
 1 &  \mycircle{2} &  3 &  4 &  \mycircle{5} &  6 &  \mycircle{7} &  8 &  9 & 10 &
11 & 12 & \mycircle{13} & 14 & 15 & 16 & 17 & 18 & \mycircle{19} & 20 \\
21 & 22 & \mycircle{23} & 24 & 25 & 26 & 27 & 28 & \mycircle{29} & 30 &
\mycircle{31} & 32 & 33 & 34 & 35 & 36 & \mycircle{37} & 38 & 39 & 40 \\
41 & 42 & \mycircle{43} & 44 & 45 & 46 & \mycircle{47} & 48 & 49 & 50 &
51 & 52 & \mycircle{53} & 54 & 55 & 56 & 57 & 58 & \mycircle{59} & 60 \\
\mycircle{61} & 62 & 63 & 64 & 65 & 66 & 67 & 68 & 69 & 70 &
\mycircle{71} & 72 & \mycircle{73} & 74 & 75 & 76 & 77 & 78 & \mycircle{79} & 80 \\
81 & 82 & 83 & 84 & 85 & 86 & 87 & 88 & \mycircle{89} & 90 &
91 & 92 & 93 & 94 & 95 & 96 & \mycircle{97} & 98 & 99 & 100
\end{array}
\]
\caption{N-sieve applied to first 100 integers}\label{table:N100}
\end{center}
\end{table}

We will now designate $\mathbb{P{'}}$, which has been alternately created via the sieving operation above, by the new notation $\left\lfloor{\raisebox{-.3pt}{\!{\dashuline{\begin{math}\,\mathbb{N}\end{math}}}}}\right\rfloor$ to indicate that the natural numbers have been sieved to produce this prime number sequence. In other words,
\begin{equation*}
\left\lfloor{\raisebox{-.3pt}{\!{\dashuline{\begin{math}\,\mathbb{N}\end{math}}}}}\right\rfloor = \mathbb{P{'}} = \left\lbrace{2,5,7,13,19,23,29,31,37,43,47,53,59,61,71,\dots}\right\rbrace.
\end{equation*}
Thus, we can define the N-sieve of the natural numbers in the terms that were previously-derived for $\mathbb{P{'}}$:
\begin{equation*}
\left\lfloor{\raisebox{-.3pt}{\!{\dashuline{\begin{math}\,\mathbb{N}\end{math}}}}}\right\rfloor = \mathbb{P{'}} = {\left\lbrace{(-1)^{n-1}}\left\lbrace{p^{(n)}}\right\rbrace\right\rbrace}_{n=1}^\infty = {{\left\lbrace\displaystyle\sum\limits_{n=1}^\infty (-1)^{n-1}p_r^{(n)}\right\rbrace}_{r=1}^\infty}.
\end{equation*}
At this point, we conclude that the prime number sequence $\mathbb{P{'}}$ can be constructed in either of three different ways:
\begin{enumerate}
\item
Via the bulk alternating summation of the infinite prime number subsequences of increasing order;
\item
By arranging the prime number subsequences of increasing order side-by-side and taking sums laterally across the subsequences row-by-row to create the $\mathbb{P{'}}$ sequence iteratively term-by-term as illustrated in Table \ref{table:sum};
\item
By the new N-sieve introduced above.
\end{enumerate}


\section{Extension of the Application}


An interesting property that we observed earlier in the relationship between the set of all prime numbers $\mathbb{P{}}$ and the complement prime number sets is:
\begin{equation*}
\mathbb{P{}}-{I}=\mathbb{P}_{I},
\end{equation*}
or more clearly,
\begin{equation*}
\mathbb{P{}} - \left\lbrace2,5,7,13,19,23,29,31,\dots\right\rbrace = \left\lbrace {p_{{}_{{}_{{2}}}},p_{{}_{{}_{{5}}}},p_{{}_{{}_{{7}}}},p_{{}_{{}_{{13}}}},p_{{}_{{}_{{19}}}},p_{{}_{{}_{{23}}}},
p_{{}_{{}_{{29}}}},p_{{}_{{}_{{31}}}},\dots}\right\rbrace
\end{equation*}
where the prime numbers of the sequence ${I}$ form the indexes for the complement set of primes $\mathbb{P}_{{I}}$ such that 
\begin{equation*}
\mathbb{P}_{{{I}}} = \lbrace p_k \mid k \in {I} \rbrace.
\end{equation*}
When we take the new sieving operation to the next level by sifting the set of all prime numbers $\mathbb{P{}}=P^{(1)}$, we produce the OEIS sequence A262275 (\cite{OEIS_A262275}):

\begin{center}
\noindent 2 \mycircle{3} 5 7 \mycircle{11} 13 \mycircle{17} 19 23 29 31 37 \mycircle{41} 43 47 53 59 61 \mycircle{67} 73 \dots .
\end{center}

We will now designate $\mathbb{P{''}}$, which has been alternately created via the sieving operation above, by the notation $\left\lfloor{\raisebox{-.3pt}{\!{\dashuline{\begin{math}\,\mathbb{P}\end{math}}}}}\right\rfloor$ to indicate that the prime numbers have been N-sieved to produce the prime number sequence
\begin{equation*}
\left\lfloor{\raisebox{-.3pt}{\!{\dashuline{\begin{math}\,\mathbb{P}\end{math}}}}}\right\rfloor = \mathbb{P{''}} = \mathbb{P}_{{I}} = \left\lbrace{3,11,17,41,67,\dots}\right\rbrace.
\end{equation*}
Thus, we can now define the prime number sequence obtained by the N-sieving of the prime numbers $\mathbb{P}$ as:
\begin{equation*}
\left\lfloor{\raisebox{-.3pt}{\!{\dashuline{\begin{math}\,\mathbb{P}\end{math}}}}}\right\rfloor = \mathbb{P{''}} = {\left\lbrace{(-1)^{n}}\left\lbrace{p^{(n)}}\right\rbrace\right\rbrace}_{n=2}^\infty = {{\left\lbrace\displaystyle\sum\limits_{n=2}^\infty (-1)^{n}p_r^{(n)}\right\rbrace}_{r=1}^\infty}
\end{equation*}
so that
\begin{equation*}
\mathbb{P{'}}=\mathbb{P{}}-\mathbb{P{''}} \;\;\;\;\; \Rightarrow \;\;\;\;\; \left\lfloor{\raisebox{-.3pt}{\!{\dashuline{\begin{math}\,\mathbb{N}\end{math}}}}}\right\rfloor= \mathbb{P{}}-\left\lfloor{\raisebox{-.3pt}{\!{\dashuline{\begin{math}\,\mathbb{P}\end{math}}}}}\right\rfloor.
\end{equation*}
As we continue to sieve prime number subsequences of increasing order, we discover 
(\cite{OEIS_A333243}, \cite{OEIS_A333244}):

\smallskip

\noindent $\left\lfloor{\raisebox{-.3pt}{\!{\dashuline{\begin{math}\,{P}^{(2)} \end{math}}}}}\right\rfloor = $\noindent 3 \mycircle{5} 11 17 \mycircle{31} 41 \mycircle{59} 67 83 109 127 157 \mycircle{179} 191\dots
\smallskip

\noindent
and

\smallskip

\noindent $\left\lfloor{\raisebox{-.3pt}{\!{\dashuline{\begin{math}\,{P}^{(3)} \end{math}}}}}\right\rfloor = $\noindent 5 \mycircle{11} 31 59 \mycircle{127} 179 \mycircle{277} 331 431 599 709 919 \mybigcircle{1063} 1153\dots

\smallskip

which reveals the following relationships between sieved prime number sequences of higher order:
\begin{equation}
\left\lfloor{\raisebox{-.3pt}{\!{\dashuline{\begin{math}\,{P^{(2)}}\end{math}}}}} \right\rfloor = \mathbb{P}^{(3)}=\mathbb{P_{\mathbb{P_{{I}}}}} \label{eq:9}
\end{equation}
and
\begin{equation}
\left\lfloor{\raisebox{-.3pt}{\!{\dashuline{\begin{math}\,{P^{(3)}}\end{math}}}}} \right\rfloor = \mathbb{P}^{(4)}=\mathbb{P_{\mathbb{P_{\mathbb{P_{{I}}}}}}}. \label{eq:10}
\end{equation}
The sieving operations on the prime number subsequences of higher order in Eq. \ref{eq:9}--\ref{eq:10} can be further defined as
\begin{equation*}
\left\lfloor{\raisebox{-.3pt}{\!{\dashuline{\begin{math}\,{P^{(2)}}\end{math}}}}} \right\rfloor={P^{(2)}}-\mathbb{P}^{(2)}={P^{(2)}}-\left\lfloor{\raisebox{-.3pt}{\!{\dashuline{\begin{math}\,{P^{(1)}}\end{math}}}}} \right\rfloor
\end{equation*}
and
\begin{equation*}
\left\lfloor{\raisebox{-.3pt}{\!{\dashuline{\begin{math}\,{P^{(3)}}\end{math}}}}} \right\rfloor={P^{(3)}}-\mathbb{P}^{(3)}={P^{(3)}}-\left\lfloor{\raisebox{-.3pt}{\!{\dashuline{\begin{math}\,P^{(2)}\end{math}}}}}\right\rfloor.
\end{equation*}
With these new tools, we can now define all higher-order prime number subsequences of the form (see \cite{PrimesHavingPrimeSubscripts})
\begin{equation*}
\left\lbrace{p_{p_{._{._{._{p_{p_{n}}}}}}}}\right\rbrace{\Biggr\rvert_{n=1}^{\infty}} \\
\end{equation*}
in terms of the sieved prime number sequences as follows: 
\begin{align*}
\left\lbrace{p_{n}}\right\rbrace{\Biggr\rvert_{n=1}^{\infty}} &= P^{(1)}=\left\lfloor{\raisebox{-.3pt}{\!{\dashuline{\begin{math}\,\mathbb{N}\end{math}}}}}\right\rfloor + \left\lfloor{\raisebox{-.3pt}{\!{\dashuline{\begin{math}\,P^{(1)}\end{math}}}}} \right\rfloor = \mathbb{P}^{(1)}+\mathbb{P}^{(2)}
\\
\left\lbrace{p_{p_{n}}}\right\rbrace{\Biggr\rvert_{n=1}^{\infty}} &= P^{(2)} = \left\lfloor{\raisebox{-.3pt}{\!{\dashuline{\begin{math}\,P^{(1)}\end{math}}}}} \right\rfloor + \left\lfloor{\raisebox{-.3pt}{\!{\dashuline{\begin{math}\,P^{(2)}\end{math}}}}}\right\rfloor = \mathbb{P}^{(2)}+\mathbb{P}^{(3)}
\\
\left\lbrace{p_{p_{p_{n}}}}\right\rbrace{\Biggr\rvert_{n=1}^{\infty}} &= P^{(3)} = \left\lfloor{\raisebox{-.3pt}{\!{\dashuline{\begin{math}\,P^{(2)}\end{math}}}}}\right\rfloor + \left\lfloor{\raisebox{-.3pt}{\!{\dashuline{\begin{math}\,P^{(3)}\end{math}}}}}\right\rfloor = \mathbb{P}^{(3)}+\mathbb{P}^{(4)}
\\
\vdots
\end{align*}


\section{Venn Diagram Analysis}


\begin{figure}[h]
\centering
\includegraphics[scale=0.5]{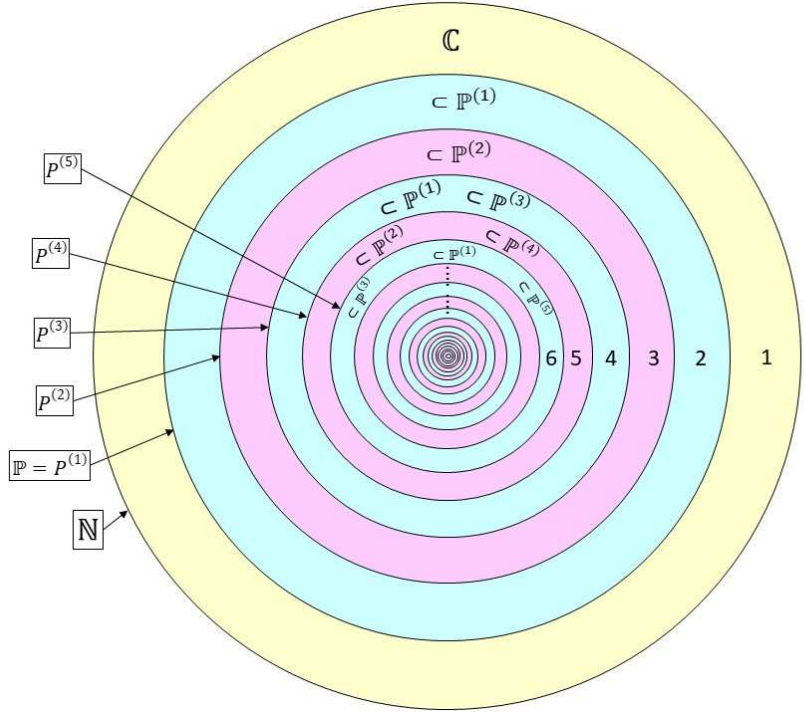}
\caption{Venn diagram relating the various sequences.} \label{F:Venn}
\end{figure}

Figure \ref{F:Venn} is a Venn diagram which illustrates the interrelationships between the prime number subsequences of higher order and the N-sieved prime number sequences heretofore disclosed.  

We interpret the diagram as follows: 
The outermost yellow ring \(\mathbb{C}\) is the composite numbers. 
Starting with the outermost blue ring, we see that the range of prime numbers $P^{(1)}-P^{(2)}$ is a subset of $\mathbb{P}^{(1)}$ as denoted by $\subset\mathbb{P}^{(1)}$ inscribed inside the outer blue ring.  Continuing on to the next blue ring inside the outer blue ring, we see that the range of prime numbers $P^{(3)}-P^{(4)}$ constitute the next subset of prime numbers to include in $\mathbb{P}^{(1)}$ denoted by $\subset\mathbb{P}^{(1)}$ inscribed inside that blue ring.  But the second inner blue ring in the Venn diagram not only contains the next subset of prime numbers to be included in the range of $\mathbb{P}^{(1)}$, but it also contains the first set of primes to be included in the next higher sieved set $\mathbb{P}^{(3)}$.  This pattern continues to infinity as each inner blue ring continues to contribute additional primes to include in $\mathbb{P}^{(1)}$ in addition to providing the beginning terms of a new sequence of a higher sieved set of primes which in turn will also continue to gather new prime numbers for its sequence as each blue ring is encountered all the way to infinity. 

This process works similarly for the alternate sets of sieved primes as follows: Starting with the outermost red ring, we see that the range of prime numbers $P^{(2)}-P^{(3)}$ is a subset of $\mathbb{P}^{(2)}$ as denoted by $\mathbb{P}^{(2)}$ inscribed inside the outer red ring.  Continuing on to the next red ring inside the outer red ring, we see that the range of prime numbers $P^{(4)}-P^{(5)}$ constitute the next subset of prime numbers to include in $\mathbb{P}^{(2)}$ denoted by $\subset\mathbb{P}^{(2)}$ inscribed inside that inner red ring.  But the second inner red ring not only contains the next subset of prime numbers to be included in the range of $\mathbb{P}^{(2)}$, but it also contains the first set of primes to be included in the next higher sieved set $\mathbb{P}^{(4)}$.  This pattern continues to infinity as each inner red ring continues to contribute additional primes to be included in $\mathbb{P}^{(2)}$ in addition to providing the beginning terms of a new sequence of a higher sieved set of primes which in turn will also continue to gather new prime numbers for its sequence as each inner red ring is encountered all the way to infinity.

In order to add more clarity to the Venn diagram, we define the elements of the first six annuli as follows:

\noindent Ring No. 1: $\mathbb{N}-\mathbb{P}=\mathbb{C}$

\noindent Ring No. 2: $P^{(1)}-P^{(2)} \subset \mathbb{P}^{(1)}$

\noindent Ring No. 3: $P^{(2)}-P^{(3)} \subset \mathbb{P}^{(2)}$

\noindent Ring No. 4: $P^{(3)}-P^{(4)} \subset \mathbb{P}^{(3)} \subset \mathbb{P}^{(1)} $

\noindent Ring No. 5: $P^{(4)}-P^{(5)} \subset \mathbb{P}^{(4)} \subset \mathbb{P}^{(2)} $

\noindent Ring No. 6: $P^{(5)}-P^{(6)} \subset \mathbb{P}^{(5)} \subset \mathbb{P}^{(3)} \subset \mathbb{P}^{(1)}$


\section{General Case}


At this point, we present a theorem for the general case of Theorem \ref{1.1}:

\begin{theorem} \label{6.1}
A unique subset of prime numbers $\mathbb{P}^{(i)}$ exists which, when applied as indexes to a subset of prime numbers $P^{(i)}$, yields the complement set of primes $\mathbb{P}^{(i+1)}$ so that
\begin{equation}
\mathbb{P}^{(i)}=P^{(i)}-\mathbb{P}^{(i+1)}. \label{eq:11}
\end{equation}
\end{theorem}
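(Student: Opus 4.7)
The plan is to proceed by induction on $i$, with Theorem~\ref{1.1} serving as the base case $i=1$ under the identifications $\mathbb{P}^{(1)} = \mathbb{P}' = I$, $\mathbb{P}^{(2)} = \mathbb{P}'' = \mathbb{P}_I$, and $P^{(1)} = \mathbb{P}$. The key observation enabling the inductive step is that the proof of Theorem~\ref{1.1} invokes only two structural features of $\mathbb{P} = P^{(1)}$: it is a strictly increasing infinite sequence of positive integers, and the prime-counting map $n \mapsto p_n$ carries it strictly-increasingly into itself. Both features persist when $P^{(1)}$ is replaced by $P^{(i)}$: a short induction on $k$ using $P^{(k)} = \{p_n \mid n \in P^{(k-1)}\}$ together with the elementary bound $p_n > n$ yields the nested chain $\mathbb{P} = P^{(1)} \supseteq P^{(2)} \supseteq \cdots$, whence $n \mapsto p_n$ restricts to a strictly increasing self-injection of $P^{(i)}$ for every $i$.

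Given this, I would transcribe the construction from the proof of Theorem~\ref{1.1} verbatim to level $i$. Enumerate the smallest candidate elements of $\mathbb{P}^{(i)} \subseteq P^{(i)}$ as $q_1 < q_2 < \cdots < q_n$, so that the corresponding elements of $\mathbb{P}^{(i+1)} = \{p_r \mid r \in \mathbb{P}^{(i)}\}$ are $p_{q_1} < \cdots < p_{q_n}$, and let $q$ be the smallest element of $P^{(i)}$ distinct from $\{q_1, \ldots, q_n, p_{q_1}, \ldots, p_{q_n}\}$. The contradiction argument of Theorem~\ref{1.1} applies unchanged: supposing $q \in \mathbb{P}^{(i+1)}$ and writing $q = p_r$ with $r \in \mathbb{P}^{(i)}$, the inequality $r < p_r = q$ combined with the minimality of $q$ forces $r \in \{q_1, \ldots, q_n\}$, whence $q = p_r \in \{p_{q_1}, \ldots, p_{q_n}\}$ contradicts the choice of $q$. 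Thus $q \in \mathbb{P}^{(i)}$; iterating determines $\mathbb{P}^{(i)}$ uniquely, and the exhaustion argument of Theorem~\ref{1.1} then shows $P^{(i)} = \mathbb{P}^{(i)} \cup \mathbb{P}^{(i+1)}$ with the two sets disjoint, which is equation~(\ref{eq:11}).

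The step I expect to require the most care is not the combinatorial argument itself but the verification that $\mathbb{P}^{(i+1)}$ in the sense needed here---namely, the indexed image $\{p_r \mid r \in \mathbb{P}^{(i)}\}$---actually coincides with the $\mathbb{P}^{(i+1)}$ introduced earlier in the paper via the alternating sum $\{P^{(i+1)}\} - \{P^{(i+2)}\} + \{P^{(i+3)}\} - \cdots$. To reconcile the two, I would mimic the proof of Theorem~\ref{2.1}: formally adding the alternating-sum expressions for $\mathbb{P}^{(i)}$ and $\mathbb{P}^{(i+1)}$ telescopes to $P^{(i)}$, and since the inductive construction above also produces a partition $P^{(i)} = \mathbb{P}^{(i)} \cup \mathbb{P}^{(i+1)}$ with $\mathbb{P}^{(i+1)}$ prescribed as the indexed image, the uniqueness half of the argument forces the two descriptions of $\mathbb{P}^{(i+1)}$ to agree. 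Once this reconciliation is recorded, the remainder is a direct translation of the argument of Theorem~\ref{1.1}.
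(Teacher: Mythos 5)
Your proposal follows essentially the same route as the paper: the paper's own proof of Theorem~\ref{6.1} is a verbatim transcription of the argument of Theorem~\ref{1.1} with $I$, $\mathbb{P}_I$, $\mathbb{P}$ replaced by $\mathbb{P}^{(i)}$, $\mathbb{P}^{(i+1)}$, $P^{(i)}$, concluded ``by induction on $i$,'' which is exactly your plan. If anything, you are more careful than the paper on the points it glosses over --- verifying the nested chain $P^{(1)} \supseteq P^{(2)} \supseteq \cdots$ so that $n \mapsto p_n$ is a self-map of $P^{(i)}$, taking $q$ minimal in $P^{(i)}$ rather than among all primes, and reconciling the indexed-image description of $\mathbb{P}^{(i+1)}$ with the alternating-sum one --- so your write-up is sound.
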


We will show that if there is a subset $\mathbb{P}^{(i)}\subset P^{(i)}$ such that $\mathbb{P}^{(i)}$ and $\mathbb{P}^{(i+1)}$ are a partition of $P^{(i)}$, that is if
\begin{equation*}
P^{(i)}=\mathbb{P}^{(i)}\cup\mathbb{P}^{(i+1)}
\end{equation*}
and
\begin{equation*}
\mathbb{P}^{(i)}\cap\mathbb{P}^{(i+1)}=0,
\end{equation*}
then $\mathbb{P}^{(i)}$ is unique and $\mathbb{P}^{(i)}$ and $\mathbb{P}^{(i+1)}$ are complement sets of  $P^{(i)}$.

\begin{proof}
We begin by showing that $\mathbb{P}^{(i)}$ of Eq. \ref{eq:11} is unique and that the first $n$ elements of $\mathbb{P}^{(i)}$ and $\mathbb{P}^{(i+1)}$ determine the $n+1st$ element of $\mathbb{P}^{(i)}$. First, we write
\begin{equation*}
q_1,\dots,q_n
\end{equation*}
to designate the smallest elements of $\mathbb{P}^{(i)}$ in increasing order. And because the $nth$ prime function $p_n$ is a monotonic increasing function, it follows that
\begin{equation*}
p_{q_{1}},\dots,p_{q_{n}}
\end{equation*}
are the smallest elements of $\mathbb{P}^{(i+1)}$ in corresponding increasing order. Now let $q$ be the smallest prime number that is different from all $q_1, \dots, q_n$ and $p_{q_{1}}, \dots, p_{q_{n}}$. We know that either $q \in \mathbb{P}^{(i)}$ or $q \in \mathbb{P}^{(i+1)}$, and our claim is that $q \in \mathbb{P}^{(i)}$. But if we suppose to the contrary that $q \in \mathbb{P}^{(i+1)}$, then there must be a prime number $r$ such that $q=p_r$. Now we assumed that $q>p_{q_{n}}$, and it follows from the definition that $r>q_n$ and $r<p_r=q$. But this means that $r$ satisfies the same conditions as $q$: i.e., that $r$ is the smallest prime different from all $q_1, \dots, q_n$ and $p_{q_{1}}, \dots, p_{q_{n}}$. So we should have chosen $r$ instead of $q$ because $r$ is smaller than the $q$ we assumed to be an element of $\mathbb{P}^{(i+1)}$. Therefore, our assumption that $q \in \mathbb{P}^{(i+1)}$ is a contradiction, and the smallest prime $q$ different from all $q_1$,\dots,$q_n$ and $p_{q_{1}}$,\dots,$p_{q_{n}}$ must be an element of $\mathbb{P}^{(i)}$ because the elements of $\mathbb{P}^{(i)}$, which serve as the subscripts of $\mathbb{P}^{(i+1)}$,  always lag behind the elements of $\mathbb{P}^{(i+1)}$. Furthermore, since all prime numbers $P^{(i)}$ are consumed in the process of adding the smallest available prime number different from all $q_1, \dots, q_n$ and $p_{q_{1}}, \dots, p_{q_{n}}$ to $\mathbb{P}^{(i)}$, then we have shown that $\mathbb{P}^{(i)}$ and $\mathbb{P}^{(i+1)}$ are complement sets of  $P^{(i)}$. Thus, we have shown by induction on $i$ that Theorem \ref{6.1}, as the general case of Theorem \ref{1.1}, holds true.
\end{proof}

We note here that the prime number sequences
\begin{equation*}
\left\lfloor{\raisebox{-.3pt}{\!{\dashuline{\begin{math}\,\mathbb{\mathbb{P^{'}}}\end{math}}}}}\right\rfloor,\left\lfloor{\raisebox{-.3pt}{\!{\dashuline{\begin{math}\,\mathbb{\mathbb{P^{''}}}\end{math}}}}}\right\rfloor,\left\lfloor{\raisebox{-.3pt}{\!{\dashuline{\begin{math}\,\mathbb{\mathbb{P^{'''}}}\end{math}}}}}\right\rfloor,\dots
\end{equation*}
do not exist because the complement set of primes corresponding to the indexes of the prime numbers in each of these sequences has already been sifted out via the N-sieving operation.  In other words, one cannot sift any terms out of a prime number sequence which has already been N-sieved.  As an example, consider the prime number subsequence
\begin{equation*}
\mathbb{P^{'}} = \left\lbrace {p{'}} \right\rbrace = \left\lbrace 2,5,7,13,19,23,29,31,37,43,47,53,59,61,71,\dots \right\rbrace.
\end{equation*}
If we attempt to apply the first term in this sequence as an index to the set of prime numbers for the purpose of eliminating that term from $\mathbb{P^{'}}$, we see that $p_2=3$ isn't present in $\mathbb{P^{'}}$.  Neither is the next term $p_5=11$.  And thus is the case for every term in $\mathbb{P^{'}}$ for which one would attempt to apply as an index to the set of all prime numbers for the purpose of eliminating primes from $\mathbb{P^{'}}$.


\section{Conclusion}


In this paper, we derived some of the basic properties of alternating sums of sequential sets of higher-order prime number subsequences and presented several methods available to derive them.  It is the author's hope that this research will shed some light on new ways to explore prime number sequences of higher order and that the methods presented herein will contribute new ideas to that end.

\section{Acknowledgements}

We gratefully acknowledge and thank the Number Theory Editor and the referees of the Missouri Journal of Mathematical Sciences for their helpful comments during the editing process, and we particularly express our gratitude to them for their help with the outlining of the proofs of the theorems.

\end{document}